\newcommand{\repeattheorem}[1]{%
	\begingroup
	\renewcommand{\thetheorem}{\ref{#1}}%
	\expandafter\expandafter\expandafter\theorem
	\csname reptheorem@#1\endcsname
	\endtheorem
	\endgroup
}
\xdef\csname reptheorem@#1\endcsname{%
		\unexpanded\expandafter{\BODY}%
	}%
\unskip\label{#1}\endtheorem
\newcommand{\NN}{\mathbb{N}}
\newcommand{\ZZ}{\mathbb{Z}}
\newcommand{\QQ}{\mathbb{Q}}
\newcommand{\PrA}{\mathop{\mathbf{PrA}}\nolimits}
\newcommand{\BA}{\mathop{\mathbf{BA}}\nolimits}
\newcommand{\Th}{\mathop{\mathbf{Th}}\nolimits}
\newcommand{\Digit}{\mathop{Digit}\nolimits}
\renewcommand{\epsilon}{\varepsilon}
\renewcommand{\phi}{\varphi}
\newcommand{\sref}[2]{\hyperref[#2]{#1 \ref*{#2}}}
\newcommand{\dref}[2]{\hyperref[#2]{ #1 }}
\newcommand{\Ac}{\mathcal{A}}
\newcommand{\Lc}{\mathcal{L}}
\newcommand{\eqdef}{\stackrel{\mbox{\tiny\rm def}}{=}}
\newcommand{\ra}{\rightarrow}
\newcommand{\Ra}{\Rightarrow}
\newcommand{\lra}{\leftrightarrow}
\spnewtheorem{hyp}{Conjecture}[section]{\bfseries}{\itshape}
\spnewtheorem{ex}{Example}{\bfseries}{\itshape}
\spnewtheorem{stm}{Statement}[section]{\bfseries}{\itshape}
\newcommand{\dotminus}{\mathbin{\text{\@dotminus}}}
\newcommand{\@dotminus}{%
	\ooalign{\hidewidth\raise1ex\hbox{.}\hidewidth\cr$\m@th-$\cr}%
}
\begin{document}
	\author{Alexander Zapryagaev\thanks{The publication was prepared within the framework of the Academic Fund Program at HSE University (grant 23-00-022).}}
	\title{Some properties of B\"uchi Arithmetics}
	\institute{National Research University Higher School of Economics, 6, Usacheva Str., Moscow, 119333, Russian Federation}
	
	\maketitle
	
	\begin{abstract}
		B\"uchi arithmetics $\BA_n$, $n\ge 2$, are extensions of Presburger arithmetic with an unary functional symbol $V_n(x)$ denoting the largest power of $n$ that divides $x$. A rank of a linear order is the minimal number of condensations required to reach a finite order. We show that linear orders of arbitrarily large finite rank can be interpreted in $\BA_n$. We also prove that the extension of the axioms of Presburger arithmetic with the inductive definition of $V_n$ does not yield an axiomatization of $\BA_n$.
	\end{abstract}
	
	\section{Preliminaries}
	
	\begin{definition}
		A \textbf{B\"uchi arithmetic} $\BA_n$, $n\ge 2$, is the theory $\Th(\NN;=,+,V_n)$ where $V_n$ is an unary functional symbol such that $V_n(x)$ is the largest power of $n$ that divides $x$ (we set $V_n(0):=0$ by definition).
	\end{definition}
	
	These theories were proposed by R.~B\"uchi in order to describe the recognizability of sets of natural numbers by finite automata through definability in some arithmetic language.
	
	Let $\Digit_n(x,y)$ be the digit corresponding to $n^y$ in the $n$-ary expansion of $x\in\NN$. Consider an automaton over the alphabet $\{0,\ldots,n-1\}^m$ that, at step $k$, receives the input $(\Digit_n(x_1,k),\ldots,\Digit_n(x_m,k))$ of the digits corresponding to $n^k$ in the $n$-ary expansion of $(x_1,\ldots,x_m)$. We say the automaton accepts the tuple $(x_1,\ldots,x_m)$ if it accepts the sequence of tuples $(\Digit_n(x_1,k),\ldots,\Digit_n(x_m,k))$.
	
	Under the conditions above, the following classic result by V\'eronique Bruy\`ere \cite{bruyere,bv} holds:
	
	\begin{theorem}\label{there}
		Let $\varphi(x_1,\ldots,x_m)$ be a $\BA_n$-formula. Then there is an effectively constructed automaton $\Ac$ such that $(a_1,\ldots,a_m)$ is accepted by $\Ac$ iff $\NN\models\varphi(a_1,\ldots,a_m)$.
		
		Contrariwise, let $\Ac$ be a finite automaton working on $m$-tuples of $n$-ary natural numbers. Then there is an effectively constructed $\BA_n$-formula $\varphi(x_1,\ldots,x_m)$ such that $\NN\models\varphi(a_1,\ldots,a_m)$ iff $(a_1,\ldots,a_m)$ is accepted by $\Ac$. Furthermore, this formula is of complexity class not surpassing $\Sigma_2$ \cite{haase}.
	\end{theorem}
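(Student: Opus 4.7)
The proof splits naturally into two independent directions; in both, the main technical content is an encoding between syntactic formulas and automata-theoretic runs.

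For the direction from formulas to automata, I would proceed by structural induction on $\varphi$. The base cases need automata for the atomic relations $x+y=z$, $x=y$, and $V_n(x)=y$: addition tracks a single carry digit in its finite state, equality compares position by position, and $V_n(x)=y$ asserts that $y$ has exactly one nonzero $n$-ary digit, equal to $1$, at some position $k$, that $x$ has digit $0$ in every position below $k$, and that $x$ has a nonzero digit at $k$, which is easily verified in finite state. The inductive step then uses the standard closure of regular languages under Boolean operations and projection: for $\wedge$ take the product automaton, for $\neg$ first determinize and then complement, and for $\exists$ project away the component of the alphabet corresponding to the quantified variable and redeterminize. A minor bookkeeping point is the convention of padding shorter numbers by leading zeros so that all tuples are read as words over $\{0,\ldots,n-1\}^m$.

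For the direction from automata to formulas, my plan is to describe an accepting run of $\Ac$ on input $(x_1,\ldots,x_m)$ directly inside $(\NN;=,+,V_n)$. The idea is to associate to every state $q\in Q$ a characteristic number $s_q$ whose $n$-ary digit at position $k$ equals $1$ iff the run is in state $q$ after reading the first $k$ input digits. The auxiliary predicates ``$y$ is a power of $n$'' ($V_n(y)=y\wedge y\neq 0$) and ``the digit of $z$ at position $p$ is $d$'' (for $p$ ranging over powers of $n$) are expressible by $\BA_n$ atoms built from $+$ and $V_n$. The resulting formula takes the shape
\[
\exists s_{q_0},\ldots,s_{q_{|Q|-1}}\;\forall p\;\Phi(x_1,\ldots,x_m,s_{q_0},\ldots,s_{q_{|Q|-1}},p),
\]
with $p$ ranging over powers of $n$ and $\Phi$ a quantifier-free conjunction expressing: (i) at each relevant digit position of the input exactly one $s_q$ carries a $1$; (ii) $s_{q_0}$ carries a $1$ at position $1$ (that is, $n^0$); (iii) the transitions are consistent, i.e.\ if $s_q$ has a $1$ at position $p$ and the input digit at $p$ is $d$, then $s_{\delta(q,d)}$ has a $1$ at position $np$; and (iv) at the most significant nonzero position of the input an accepting state is active.

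The hardest part, and the reason I expect to have to be careful, is keeping the overall quantifier alternation within $\Sigma_2$ as claimed in \cite{haase}. The shape above is already $\Sigma_2$, but only provided that the digit-access predicate and the power-of-$n$ predicate genuinely unfold into quantifier-free $\BA_n$ atoms rather than introducing further existentials; otherwise one picks up extra alternations when expanding ``the digit at position $p$ is $d$'' into atomic form. I would address this by unpacking both predicates as atomic conditions on $V_n$ and $+$ directly (the point being that $V_n$ already gives us a first-order handle on positions without an auxiliary quantifier), and by restricting the inner $\forall p$ implicitly to positions below the input length via an antecedent ``$p$ divides some $x_i$-related quantity'', which is itself quantifier-free. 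The complexity bound then follows immediately from the shape of the resulting formula.
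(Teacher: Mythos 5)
The paper itself offers no proof of this theorem: it is quoted as a classical result of Bruy\`ere (with the $\Sigma_2$ refinement attributed to Haase), so your sketch can only be measured against the standard arguments in the cited literature --- and in outline it does follow them: structural induction with product, complementation and projection for the formula-to-automaton direction, and the encoding of an accepting run by state-characteristic numbers $s_q$ for the converse. Two small repairs are needed in the first direction: your automaton for $V_n(x)=y$ must also accept the pair $(0,0)$, since the paper stipulates $V_n(0)=0$; and after projecting away an existentially quantified track you must close the language under trailing padding symbols, since the witness for the quantified variable may need more digits than the remaining components.

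The one step I do not believe as written is the claim that ``the digit of $z$ at position $p$ is $d$'' unfolds into quantifier-free $\BA_n$ atoms. The only terms available are built from $+$ and $V_n$, and the digit predicate is not a Boolean combination of equalities between such terms; the usual definition decomposes $z$ as $u\cdot np + dp + v$ with $v<p$, which introduces existential witnesses $u,v$ and would push your formula to the shape $\exists\forall\exists$, i.e.\ out of $\Sigma_2$. The standard repair --- and, in essence, the content of the cited $\Sigma_2$ bound --- is to exploit the \emph{uniqueness} of that decomposition: replace ``there exist $u,v$ such that $\ldots$'' by ``for all $u,v$ and all $d'<n$, if $z=u\cdot np+d'p+v$ and $v<p$, then $d'=d$'', so that the auxiliary variables merge into the universal block $\forall p$ you already have (and the bounded quantifier hidden in $v<p$ also lands there harmlessly), while the finitely many genuinely existential facts --- the choice of the final position and acceptance there --- move into the outer existential block alongside the $s_q$. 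With that adjustment the shape $\exists \overline{s}\,\forall\cdots$ with a quantifier-free matrix is achieved and the claimed complexity bound follows; without it, your argument as stated does not deliver $\Sigma_2$.
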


	\begin{definition}
		Let $\mathfrak{B}$ be a first order structure with the language containing equality and predicate symbols $P_1,\ldots,P_n$. $\mathfrak{B}$ is called \emph{automatic} \cite[Definition 1.4]{kn} if there a language $\Lc\subseteq\Omega^*$ over a finite alphabet $\Omega$ and a 
		surjective mapping $c\colon\Lc\ra\mathfrak{B}$ such that the following sets are recognizable by some automaton over $\Omega$ ($\overline{x}_i\in\Omega^*$):
		
		\begin{enumerate}
			\item The language $\Lc$;
			\item The set of all pairs $(\overline{x},\overline{y})\in\Lc^2$ that $c(\overline{x})=c(\overline{y})$;
			\item The set of all tuples $(\overline{x}_1,\ldots,\overline{x}_n)\in\Lc^n$ that $P_i(c(\overline{x}_1),\ldots,c(\overline{x}_n))$ holds in $\mathfrak{B}$ for each predicate symbol $P_i$ in the language of $\mathfrak{B}$.
		\end{enumerate}
	\end{definition}
	
	By applying \sref{Statement}{there}, we may represent each of the automata in the definition above by a corresponding formula of some B\"uchi arithmetic $\BA_n$ (the finite alphabet $\Omega$ may be, without loss of generality, taken to be $\{0,1,\ldots,n-1\}$). Taken together, these formulas provide a translation from the language of $\mathfrak{B}$ into $(=,+,V_n)$ such that $\mathfrak{B}$ is isomorphic to an internal model obtained by this translation in $\NN$. In other words, automatic structures are exactly those on-dimensionally, not necessarily with absolute equality, that are interpretable in $(\NN;=,+,V_n)$.
	
	As shown in the author's dissertation \cite{zapthe} (Theorem 4.3.4), the theories $\BA_n$ are mutually interpretable for distinct $n\ge 2$:
	
	\begin{theorem}
		Each $\BA_k$ is interpretable in any of $\BA_l$, $k,l\ge 2$.
	\end{theorem}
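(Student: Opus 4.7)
The plan is to reduce the problem to a construction of finite automata, and then to convert them into $\BA_l$-formulas via \sref{Theorem}{there}. The key idea is to represent each natural number of the intended $\BA_k$-structure by a base-$l$ natural number whose base-$l$ digits encode, in blocks of length $s := \lceil\log_l k\rceil$, the base-$k$ digits of the original value.

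First I would fix the encoding $c\colon\NN\to\NN$ so that the base-$l$ representation of $c(a)$ is the concatenation (read in the same direction) of the $s$-digit base-$l$ expansions of the base-$k$ digits of $a$, padded with leading zeros until the total length is a multiple of $s$. The image $D := c(\NN)\subseteq\NN$ is recognized by an automaton on base-$l$ input that checks that (i) the input length is divisible by $s$ and (ii) every $s$-block encodes a value strictly less than $k$; by \sref{Theorem}{there} this yields a $\BA_l$-formula defining $D$.

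Next I would construct base-$l$ automata for the $\BA_k$-operations transported along $c$. Any automaton reading base-$k$ input can be simulated by a base-$l$ automaton that consumes $s$ base-$l$ digits of each coordinate per step and decodes the buffered $s$-block into a single base-$k$ digit, with a finite additional state overhead of order $s\cdot k$. Applying this simulation to the elementary base-$k$ automata for $+$ and $V_k$ on $(\NN;=,+,V_k)$ produces base-$l$ automata for the transported relations $A^\star(x,y,z)$ and $V_k^\star(x,y)$ on $D$ that express, respectively, $c^{-1}(z) = c^{-1}(x) + c^{-1}(y)$ and $c^{-1}(y) = V_k(c^{-1}(x))$. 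A further application of \sref{Theorem}{there} converts these automata into $\BA_l$-formulas.

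Taking $D$ with absolute equality as the congruence, together with $A^\star$ and $V_k^\star$, gives a one-dimensional interpretation of $\BA_k$ in $\BA_l$ under which $c$ is, by construction, an isomorphism from $(\NN;=,+,V_k)$ onto the internal model. The main obstacle is the correctness of the $V_k$-automaton: the encoded output $c(V_k(a))$ must place its unique non-zero base-$l$ digit precisely at the least significant position of the $s$-block corresponding to the lowest non-zero base-$k$ digit of $a$, and the simulating automaton must enforce this block alignment rigidly in its state. Once that point is handled, the rest follows by routine automata-theoretic construction and the appeal to \sref{Theorem}{there}.
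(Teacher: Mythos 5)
Your construction is correct, but it is not the route the paper takes. The paper derives the general statement by composing two special cases stated immediately after it: $\BA_{k^2}$ is interpretable in $\BA_k$, and $\BA_k$ is interpretable in $\BA_{k+1}$; iterating the first to reach some $\BA_{l^{2^m}}$ with $l^{2^m}\ge k$ inside $\BA_l$, iterating the second to climb from $\BA_k$ up to $\BA_{l^{2^m}}$, and composing the interpretations yields the theorem. You instead build one uniform encoding $c$ sending each base-$k$ digit to an $s$-block of base-$l$ digits and verify directly that the image $D$ and the transported graphs of $+$ and $V_k$ are recognizable by base-$l$ automata, i.e.\ that $(\NN;=,+,V_k)$ is an $l$-automatic structure; Theorem~\ref{there} and the completeness of $\BA_l$ then finish the job. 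Your single construction in fact subsumes both of the paper's lemmas as the special cases $l=k$, $s=2$ (digit pairing) and $l=k+1$, $s=1$ (sub-alphabet embedding), and it ties in more explicitly with the discussion of automatic structures in Section~1; what the paper's decomposition buys is two independently quotable steps, each with lighter verification (no block-alignment bookkeeping for $V_k$), at the cost of an indirect composition of interpretations. One presentational point in your write-up: the clause \enquote{input length divisible by $s$} is not intrinsic to a natural number, whose base-$l$ expansion can be padded with arbitrarily many leading zeros; the condition you actually need is that, grouping the digits into $s$-blocks from the least significant end, every block encodes a value below $k$ --- this is regular, so nothing breaks, but it is the statement that should appear.
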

	
	This is shown by the combination of two following theorems:
	
	\begin{theorem}
		Each $\BA_{k^2}$ can be interpreted in $\BA_k$.
	\end{theorem}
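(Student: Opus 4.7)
The plan is to give a one-dimensional interpretation of $\BA_{k^2}$ in $\BA_k$ whose universe is all of $\NN$, translating $=$ and $+$ identically, so only the symbol $V_{k^2}$ requires a new definition. Writing $V_k(x) = k^j$, one verifies directly from the defining property of $V_n$ that $V_{k^2}(x) = k^{2\lfloor j/2\rfloor}$; equivalently, $V_{k^2}(x)$ equals $V_k(x)$ when $j$ is even and $V_k(x)/k$ when $j$ is odd, with $V_{k^2}(0) = 0 = V_k(0)$. Thus the task reduces to detecting, within $\BA_k$, whether a given power of $k$ is an even power.

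The crucial step is producing a $\BA_k$-formula $E(y)$ that defines the set $\{0\} \cup \{k^{2i} : i \ge 0\}$. In $k$-ary notation, the nonzero elements of this set form the regular language $\{1\,0^{2i} : i \ge 0\}$ over $\{0,1,\ldots,k-1\}$, accepted by a small automaton that reads least-significant-first, alternates between two states while consuming $0$'s, and accepts only when a single $1$ is seen in the even-parity state with no further nonzero digits. By \sref{Theorem}{there}, this language is defined by a $\BA_k$-formula $E(y)$. Given $E$, I will then define
\[
y = V_{k^2}(x) \;\equivdef\; E(y) \wedge \bigl( y = V_k(x) \vee \underbrace{y + \cdots + y}_{k} = V_k(x) \bigr);
\]
a brief case analysis on the parity of $j$, together with the boundary clause $0 \in E$, shows this is functional in $x$ and returns the intended value $k^{2\lfloor j/2 \rfloor}$.

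Since $\BA_{k^2}$ is by definition the full theory of $(\NN;=,+,V_{k^2})$ and the translated $V_{k^2}$ agrees with the standard function on $\NN$, every true $\BA_{k^2}$-sentence becomes, after substituting the definition above, a true $\BA_k$-sentence, giving the required interpretation. I expect the main obstacle to be the regular-language argument that $E(y)$ is definable; once that is in hand, the functional correctness of the definition and the soundness of the interpretation are routine, and no separate axiom translation is needed.
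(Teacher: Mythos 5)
Your argument is correct, and it is the natural way to prove this statement; the paper itself does not spell out a proof (it defers to the author's dissertation, Theorem 4.3.4), so I can only assess your proposal on its own terms. The key identity $V_{k^2}(x)=k^{2\lfloor j/2\rfloor}$ for $V_k(x)=k^j$ is right, the case analysis establishing functionality of your defining formula (including the $x=0$ case via $0\in E$) goes through, and since $\BA_k$ is the complete theory of $(\NN;=,+,V_k)$, truth of the translated sentences in the standard model is all that is needed for an interpretation of the theory. One remark: the appeal to Theorem \ref{there} for $E(y)$ is sound but heavier than necessary. The even powers of $k$ are exactly the powers of $k$ that are congruent to $1$ modulo $k+1$ (since $k\equiv -1 \pmod{k+1}$, so $k^{2i}\equiv 1$ while $k^{2i+1}\equiv k$), and "$y$ is a power of $k$" is just $V_k(y)=y\wedge y\neq 0$; thus $E(y)$ is definable directly from $V_k$ and the $\PrA$-definable congruence $\equiv_{k+1}$, with no automaton construction required. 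Either route completes the proof.
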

	
	\begin{theorem}
		Each $\BA_{k}$ can be interpreted in $\BA_{k+1}$, $k\ge 2$.
	\end{theorem}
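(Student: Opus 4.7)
The plan is to use the automatic-structure characterization of \sref{Theorem}{there}: any relation on $\NN$ computed by a finite automaton reading its arguments in base $k+1$ is definable in $\BA_{k+1}$. So it suffices to exhibit an automatic encoding of $(\NN; +, V_k)$ inside $(\NN; +, V_{k+1})$.

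First, let $D \subseteq \NN$ be the set of numbers whose $(k+1)$-ary representation uses only the digits $\{0, 1, \ldots, k-1\}$ (equivalently, no digit equals $k$). A one-state automaton recognizes $D$, so $D$ is $\BA_{k+1}$-definable. I define a bijection $\pi \colon D \to \NN$ by reinterpreting $(k+1)$-ary digits as $k$-ary digits: if $x = \sum_i d_i (k+1)^i$ with $d_i \in \{0, \ldots, k-1\}$, set $\pi(x) = \sum_i d_i k^i$. The goal is then to push the signature of $\BA_k$ back to $D$ through $\pi$ and check that the pushed symbols are still $\BA_{k+1}$-definable.

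The transported addition $\oplus$ is recognized by the standard digit-by-digit $k$-ary adder with carry in $\{0, 1\}$, reading the $(k+1)$-ary digits of the two inputs and of the claimed output in lockstep and checking at the end that the accumulated carry is $0$; this is a finite automaton, so $\oplus$ is $\BA_{k+1}$-definable. For $V_k$, observe that if $j$ is the position of the lowest nonzero $(k+1)$-ary digit of $x \in D \setminus \{0\}$, then the same position is occupied by the lowest nonzero $k$-ary digit of $\pi(x)$, and that digit lies in $\{1, \ldots, k-1\}$, whence $V_k(\pi(x)) = k^j$ and therefore $\pi^{-1}(V_k(\pi(x))) = (k+1)^j = V_{k+1}(x)$. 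Thus the transport of $V_k$ to $D$ coincides with the restriction of $V_{k+1}$ to $D$, and the resulting internal structure on $D$ is, by construction, isomorphic to $(\NN; +, V_k)$, yielding the desired interpretation.

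The only non-routine step is confirming that the carry-propagation for $\oplus$ is genuinely regular; but since each digit sum $d_i + d_i' + c_i$ is bounded by $2(k-1) + 1 < 2k$, the carry stays in $\{0, 1\}$ and the automaton's state space remains finite, after which \sref{Theorem}{there} converts the automaton into a $\BA_{k+1}$-formula of the required form.
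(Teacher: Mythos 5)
The paper states this theorem without proof, deferring to the author's dissertation, so there is no in-text argument to compare against; your proof stands on its own and is correct. The digit-reinterpretation map $\pi$ from $D=\{x : \text{no base-}(k+1)\text{ digit of } x \text{ equals } k\}$ onto $\NN$ is indeed a bijection, the carry analysis for the transported addition is right (the carry stays in $\{0,1\}$ because $2(k-1)+1<2k$, and acceptance requires the carry to vanish after the last nonzero input digits), and your observation that the transported $V_k$ is literally $V_{k+1}{\restriction}_D$ is the key simplification: the position of the lowest nonzero digit is preserved by $\pi$, that digit lies in $\{1,\ldots,k-1\}$ so it is coprime to both bases in the relevant sense, and $(k+1)^j\in D$ since its digits are $0$ and $1$. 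Two small points worth making explicit if you write this up: the sum $z$ automatically lands in $D$ because the base-$k$ digits of $\pi(x)+\pi(y)$ are by definition in $\{0,\ldots,k-1\}$, so the domain is closed under $\oplus$; and the function symbol $V_k$ should formally be interpreted via its graph $\{(x,y)\in D^2 : y=V_{k+1}(x)\}$, whose totality and functionality on $D$ are immediate. With those remarks the interpretation is complete, and since the internal structure is isomorphic to $(\NN;+,V_k)$, every theorem of $\BA_k$ translates to a theorem of $\BA_{k+1}$.
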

	
	\section{Linear orders in B\"uchi arithmetics}
	
	Albert~Visser asked the question: \emph{for which arithmetical theories $T$ all their interpretations in themselves are provably isomorphic to the trivial one?}
	
	In \cite{pz} and \cite{zap2022}, the author has established:
	
	\begin{theorem}\label{one}
		\begin{enumerate}
			\item Let $\iota$ be a (one-dimensional or multi-dimensional) interpretation of $\PrA$ in $(\NN;=,+)$. The the internal model induced by $\iota$ is always isomorphic to the standard one.
			\item This isomorphism can always be expressed by a formula in the language of $\PrA$.
		\end{enumerate}
	\end{theorem}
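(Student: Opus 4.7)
The plan is to combine the Ginsburg--Spanier description of $(\NN;=,+)$-definable sets as semilinear subsets of $\NN^k$ with a structural analysis of linear orders interpretable in $\PrA$. Given a $k$-dimensional interpretation $\iota$, the domain $D$, the equivalence $\sim$, and the graph of the internal $+_M$ together with the constant $0_M$ are all semilinear, so the interpreted successor $S_M(\overline{x}):=\overline{x}+_M 1_M$ is a piecewise-affine bijection on $D/\!\sim$. Exploiting the commutativity, associativity, and identity axioms of $+_M$, one shows that on the linear piece containing $0_M$ the operation reduces to ordinary vector addition, so iterating $S_M$ from an arbitrary starting point produces a semilinear orbit. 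I would distill this into a Presburger formula $\psi(n,\overline{x},\overline{y})$ expressing $\overline{y}\sim\overline{x}+_M n_M$; the specialisation $\psi(n,0_M,\overline{y})$ is the candidate graph of the isomorphism $n\mapsto n_M$, provided the orbit $O:=\{n_M:n\in\NN\}$ exhausts $D/\!\sim$.

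For part~1 I would argue by contradiction. Suppose some $\overline{a}\in D$ represents an element of $M$ outside $O$. Consider the relation
\[
R(x,y):=(x<_M y)\wedge\neg\exists n\,\psi(n,x,y);
\]
crucially, the quantifier $\exists n$ ranges over the ambient standard $\NN$, so $R$ is a genuine formula of $\PrA$. The pair $(x,y)$ satisfies $R$ precisely when $y$ lies in a strictly later galaxy than $x$. Using the $\PrA$-axiom $\forall x\exists m(x=m+_M m\vee x=m+_M m+_M 1_M)$ inside $M$, one checks that the midpoint of two elements in distinct galaxies lies in a strictly intermediate galaxy, so $R$ induces an infinite dense linear order on the galaxy quotient of $M$, and this dense order is Presburger-definable.

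The main obstacle of the argument is then the following key lemma: no infinite dense linear order is interpretable in $(\NN;=,+)$. I would derive it by observing that structures interpretable in $\PrA$ admit automatic presentations --- via \sref{Theorem}{there} applied to the inclusion $\PrA\subseteq\BA_n$ --- combined with the classical result that every automatic linear order is scattered with finite Cantor--Bendixson rank, hence cannot be dense. An alternative direct route decomposes the semilinear domain into its asymptotic cones and verifies that the restriction of any semilinear order to a single cone is scattered, so finite unions are scattered as well.

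With this lemma in hand, no $\overline{a}\notin O$ can exist, so $O=D/\!\sim$ and $M\cong\NN$; the formula $\psi(n,0_M,\overline{y})$ from the first paragraph then exhibits the isomorphism in the language of $\PrA$, settling part~2.
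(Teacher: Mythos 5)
Your overall strategy — reduce everything to the claim that a non-standard internal model would yield an interpretable non-scattered (dense) linear order on its galaxy quotient, and then rule that out — is the same route the paper takes (it credits \cite{pz} with exactly this: a bound of $m$ on the rank of any $m$-dimensionally interpretable linear order, so that $\NN+\ZZ\cdot\QQ$, being non-scattered, cannot occur). However, your justification of the key lemma is wrong. It is not a classical result that every automatic linear order is scattered: the order $(\QQ,<)$ is itself automatic, hence interpretable in every $\BA_n$, so passing through automatic presentations via the inclusion $\PrA\subseteq\BA_n$ cannot possibly exclude dense orders. The theorem of Khoussainov--Rubin--Stephan you are presumably thinking of bounds the \emph{finite-condensation} rank of automatic linear orders, where the iteration is allowed to terminate at a dense order rather than a finite one; it does not imply scatteredness. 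Indeed, the present paper's Section 2 exists precisely because the rank obstruction valid for $(\NN;=,+)$ does \emph{not} transfer to $\BA_n$, so any argument that only uses what survives the passage to $\BA_n$ is doomed. The burden of the proof therefore falls entirely on your ``alternative direct route'' through the structure of semilinear sets, which you only gesture at; that analysis (showing every Presburger-interpretable order has finite rank, in fact rank $\le m$ for an $m$-dimensional interpretation) is the actual content of the cited proof and cannot be waved away.

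A secondary, smaller gap concerns part~2. You assert that a Presburger formula $\psi(n,\xv,\yv)$ expressing $\yv\sim\xv+_M n_M$ can be ``distilled'' from the semilinearity of the data. Since $\PrA$ has no recursion, the uniform definability of the iterate $n\mapsto n_M$ must be extracted from the affine structure of $+_M$ on the relevant pieces (essentially showing $n_M=n\cdot v+c$ for a fixed vector $v$ and then handling the finitely many pieces separately); your one-sentence appeal to ``the linear piece containing $0_M$'' is the right idea but leaves unaddressed why the orbit stays in finitely many pieces and why the resulting graph is definable uniformly in $n$. This is fixable, but it is where the work of the second assertion of the theorem lives.
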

	
	\begin{theorem}
		Let $\iota$ be a (one-dimensional or multi-dimensional) interpretation of $\BA_n$ in $(\NN;=,+,V_n)$. The the internal model induced by $\iota$ is always isomorphic to the standard one.
	\end{theorem}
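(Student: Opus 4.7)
The plan is to follow the strategy of Theorem \ref{one}, replacing the semilinearity of Presburger-definable sets with the automata-theoretic characterization of $\BA_n$-definability provided by Theorem \ref{there}. Fix a $k$-dimensional interpretation $\iota$ with domain $D\subseteq\NN^k$, equivalence relation $E$, and internal operations $+^\Mc,V_n^\Mc$; let $\Mc$ denote the quotient structure. Since $\BA_n=\Th(\NN;=,+,V_n)$ is complete, $\Mc\equiv(\NN;=,+,V_n)$, so $\Mc$ has a definable additive identity $0^\Mc$ and a $\BA_n$-definable unit $1^\Mc$ (for instance, the least strictly positive element under the definable order $x\le y\Leftrightarrow\exists z\,(x+z=y)$). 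Define $h\colon\NN\to\Mc$ by $h(m)=m\cdot 1^\Mc$. Because every atomic statement between numerals is decided by $\BA_n$, the map $h$ is injective, additive, and satisfies $V_n^\Mc(h(m))=h(V_n(m))$ for every $m\in\NN$; the entire task reduces to showing $h$ is surjective.

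Assume for contradiction that some $\alpha\in\Mc$ is not in $h(\NN)$. Then $\alpha$ exceeds every $h(m)$ in the definable order of $\Mc$. Working inside $\Mc$ via the $\Digit_n$ predicate recalled in the preliminaries, $\alpha$ has an $n$-ary length $L\in\Mc$. This $L$ is forced to be non-standard: otherwise $L=h(\ell)$ for some $\ell\in\NN$ and the internal bound $\alpha<n^L$ would place $\alpha$ in $h(\{0,1,\ldots,n^\ell-1\})\subseteq h(\NN)$, since $x\mapsto n^x$ is $\BA_n$-definable and agrees with the standard value on $h(\NN)$. Externally, $\alpha$ is represented by a tuple $\cv\in D\subseteq\NN^k$, and by Theorem \ref{there} the relation $E$, the graphs of $+^\Mc$ and $V_n^\Mc$, and therefore the parametric digit map $(\cv,m)\mapsto\Digit_n^\Mc(\alpha,h(m))$ are all $n$-recognizable in $(\NN;+,V_n)$.

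The main obstacle is to convert this $n$-recognizability into a contradiction with the non-standardness of $L$. My approach is to feed the external $n$-ary expansion of $\cv$ into the automaton witnessing the internal digit function: because $\cv$ has only finitely many nonzero external digits, the automaton enters a loop, so the sequence $(\Digit_n^\Mc(\alpha,h(m)))_{m\in\NN}$ is eventually periodic in the finite set $\{h(0),\ldots,h(n-1)\}$. Combining this external periodicity with the internal requirement that all digits at or above the non-standard position $L$ vanish is expected to force the periodic phase to be identically $h(0)$, at which point $\alpha$ is a standard-length sum of standard multiples of standard powers of $n$, so $\alpha\in h(\NN)$, contradicting the choice of $\alpha$. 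Making this merging of external and internal indexing rigorous -- especially when $k>1$, where the digit automaton reads $k$ parallel tracks -- parallels the semilinear rigidity step underlying Theorem \ref{one} and is where the bulk of the technical work goes. Once surjectivity of $h$ is established, completeness of $\BA_n$ immediately upgrades $h$ to a full isomorphism $(\NN;=,+,V_n)\to\Mc$.
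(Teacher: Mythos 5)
The paper itself gives no proof of this theorem: it is quoted as an established result from \cite{zap2022} (and the author's dissertation), so there is no in-text argument to compare against. Judged on its own terms, your proposal sets up the standard embedding $h(m)=m\cdot 1^\Mc$ and the reduction to surjectivity correctly, but the core of the argument --- the only part that is specific to $\BA_n$ --- contains two independent gaps, either of which is fatal as written. First, the claimed eventual periodicity of $m\mapsto\Digit_n^\Mc(\alpha,h(m))$ does not follow from recognizability. For fixed parameters $\cv$, Theorem \ref{there} gives you that $\{m : \Digit_n^\Mc(\alpha,h(m))=h(d)\}$ is an $n$-recognizable subset of $\NN$, and such sets are $n$-automatic, not eventually periodic: $\{x: V_n(x)=x \wedge x>0\}$, the set of powers of $n$, is already a counterexample. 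The ``automaton enters a loop after the digits of $\cv$ are exhausted'' observation only says that acceptance then depends on the remaining digits of $m$ through a finite-state device, which is exactly automaticity and nothing more; there is no single run in which $m$ stays fixed while the loop is traversed. (A further unproved assumption is that the graph of $h$ itself is $\BA_n$-definable, so that the ``parametric digit map'' is recognizable at all; for $\PrA$ this is precisely the separately-established part $(2)$ of Theorem \ref{one}.)

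Second, and more fundamentally, the strategy only ever inspects the digits of $\alpha$ at \emph{standard} positions $h(m)$, and these cannot detect non-standardness. A non-standard $\alpha$ of non-standard length $L$ has, by definition of length, a nonzero digit at the non-standard position $L-1$; the element playing the role of $n^{L-1}$ in $\Mc$ has \emph{all} of its standard-position digits equal to $h(0)$ and is nevertheless outside $h(\NN)$. So even if you could force the periodic phase to be identically $h(0)$, the conclusion ``$\alpha$ is a standard-length sum of standard multiples of standard powers of $n$'' would not follow: the information distinguishing $\alpha$ from a standard number lives entirely at non-standard positions, which your external automaton argument never reaches. Note also that the route used for $\PrA$ in \cite{pz} --- ruling out the non-scattered order type $\NN+\ZZ\cdot A$ via the rank bound --- is exactly what fails for $\BA_n$ (dense orders are automatic, and the present paper shows arbitrarily large finite ranks occur), so the actual proof in \cite{zap2022} must, and does, rest on different structural properties of automatic presentations rather than on a digitwise analysis of a single non-standard element.
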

	
	The result of \sref{Theorem}{one} $(1)$ was established by studying the linear orders interpretable in $\PrA$, obtaining a necessary condition based on the notion of \textbf{$VD^*$-rank} \cite{krs}.
	
	\begin{definition}
		Let $(L,<)$ be a linear order. By transfinite recursion, we introduce a family of equivalence relations $\simeq_{\alpha}$, $\alpha\in\mathbf{Ord}$ on $L$:
		
		\begin{enumerate}
			\item $\simeq_0$ is equality;
			\item $a\simeq_{\alpha+1}b$, if $|\{c\in L\mid (a<c<b)\mbox{ or }(b<c<a)\}/{\simeq_{\alpha}}|$ is finite;
			\item $\simeq_{\lambda}=\bigcup\limits_{\beta<\lambda}\simeq_{\alpha}$ when $\lambda$ is a limit ordinal.
		\end{enumerate}
		
		A \textbf{rank} $\mathrm{rk}(L,<)\in\mathbf{Ord}\cup\{\infty\}$ of the order $(L,<)$ is the smallest $\alpha$ such that $L/{\simeq_{\alpha}}$ is finite or $\infty$ if such does not exist.
	\end{definition}
	
	It is known \cite{rosenstein} that the \emph{scattered} linear orders, that is, not containing a suborder isomorphic to $\QQ$, exactly coincide with the orders of rank below $\infty$.
	
	The following condition has been established in \cite{pz}:
	
	\begin{theorem}
		All linear orders that are $m$-dimensionally interpretable in the structure $(\NN;=,+)$ have rank $\le m$.
	\end{theorem}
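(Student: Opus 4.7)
My plan is to prove the statement by induction on the dimension $m$ of the interpretation, exploiting the cell-decomposition / quantifier-elimination properties of $\PrA$ in the Presburger language enriched with divisibility predicates $D_k$.

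\textbf{Base case} $m=1$. An interpretation of $(L,<)$ consists of a definable set $D\subseteq\NN$, a definable equivalence $\sim$ on $D$, and a definable order $<^*$ on $D/\!\sim$. By Presburger quantifier elimination, $D$ is eventually periodic, and each $\sim$-class as well as each initial segment of $<^*$ is a Boolean combination of arithmetic progressions and bounded sets. The key observation I would isolate is: for sufficiently large $a,b\in D$ lying in the same ``residue stratum'' of the cell decomposition, the interval $(a,b)_{<^*}$ has cardinality linear in $|b-a|$, so any two elements of $L$ that lie in the same cofinal stratum either coincide modulo $\simeq_1$ or differ by a finite quotient already collapsed. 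Hence $L/\!\simeq_1$ has cardinality bounded by the number of strata, i.e.\ finitely many, giving $\mathrm{rk}(L,<)\le 1$.

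\textbf{Inductive step.} Suppose the statement for dimension $m$. Given an $(m+1)$-dimensional interpretation with domain $D\subseteq\NN^{m+1}$, I would fix one coordinate (say the last) and consider the family of slices $D_k=\{\bar a\in\NN^m:(\bar a,k)\in D\}$, together with the restrictions of $\sim$ and $<^*$. Each slice, together with the induced relations, is an $m$-dimensional Presburger interpretation of a linear suborder $L_k$ of $L$; by the induction hypothesis $\mathrm{rk}(L_k)\le m$. I would then argue that after applying the condensation $\simeq_m$ to the whole order $L$, every slice $L_k$ collapses to a finite piece, so the quotient $L/\!\simeq_m$ is controlled by the single remaining coordinate. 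Formally, one builds a $1$-dimensional interpretation of $L/\!\simeq_m$ in $\PrA$ by taking a Presburger-definable system of canonical representatives indexed by the last coordinate; the base case then yields that $\mathrm{rk}(L/\!\simeq_m)\le 1$, and hence $\mathrm{rk}(L,<)\le m+1$.

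\textbf{Main obstacle.} The delicate step is the reduction inside the inductive step: I must show that condensing by $\simeq_m$ actually kills off the first $m$ coordinates uniformly, so that what remains can be presented by a $1$-dimensional Presburger interpretation. This requires proving that within a single Presburger cell in $\NN^{m+1}$ the distances between $<^*$-successors in the first $m$ directions grow polynomially in the cell parameters, so that the resulting intervals are finite modulo $\simeq_{m-1}$. In practice, this means producing, for each cell, a Presburger-definable bound on the cardinality of $<^*$-intervals obtained by varying only the first $m$ coordinates, and then iterating this bound $m$ times along the condensation recursion. All other pieces (definability of the canonical system of representatives, transfer of definability from slices to the whole order) are routine consequences of quantifier elimination, but packaging the condensation argument cleanly enough to feed the induction is where the real work lies.
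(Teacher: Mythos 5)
The paper does not actually prove this theorem — it is quoted from \cite{pz} — so there is no in-text argument to compare against; I am assessing your proposal on its own terms, and it has a genuine gap in the inductive step that goes beyond the "packaging" difficulty you flag. You claim that after condensing the whole order by $\simeq_m$, each slice $L_k$ collapses to finitely many classes, justified by the induction hypothesis $\mathrm{rk}(L_k)\le m$. But condensation behaves the wrong way under passage to suborders: for $a,b\in L_k$ the interval $\{c\in L: a<^*c<^*b\}$ \emph{contains} the interval computed inside $L_k$, so $a\simeq_\alpha^{L}b$ implies $a\simeq_\alpha^{L_k}b$ and not conversely; the restriction of $\simeq_m^{L}$ to $L_k$ is finer than $\simeq_m^{L_k}$. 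Hence finiteness of $L_k/\simeq_m^{L_k}$ (what the IH gives) does not yield finiteness of $L_k/\simeq_m^{L}$ (what you need). Worse, the needed claim is simply false for a general slicing coordinate: interpret $\omega^2$ two-dimensionally as the lexicographic order on $\NN^2$ with the \emph{first} coordinate most significant; the slice obtained by fixing the last coordinate to $k$ meets each copy of $\omega$ exactly once, so $\simeq_1^{L}$ restricted to it is equality and it has infinitely many classes. Since a definable order on $\NN^{m+1}$ need not be lexicographic-like and need not have a uniformly "most significant" coordinate (and the significant direction can vary from cell to cell), there is no definable choice of slicing direction that rescues the reduction, and the proposed one-dimensional interpretation of $L/\simeq_m$ has nothing to stand on.

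The base case is also less routine than you present it: $<^*$ is an arbitrary definable order, generally incompatible with the magnitude order on $\NN$, so the interval $(a,b)_{<^*}$ is just the definable set $\{c: a<^*c<^*b\}$ and "cardinality linear in $|b-a|$" is not meaningful. The workable route — and the one the cited result is built on — is a counting/growth argument: an $m$-dimensionally definable subset of $\NN^m$ meets a box of side $N$ in $O(N^m)$ points, while each failure of $\simeq_{k+1}$ forces an infinite (hence, by quantifier elimination, linearly growing) definable set of $\simeq_k$-classes between the two points; iterating this shows that a drop of more than $m$ condensation levels would force growth exceeding $N^m$. That argument bounds the rank directly by the dimension without any slicing, and is where I would redirect your effort.
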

	
	As $\NN+\ZZ\cdot\QQ$ is not even scattered, a non-standard model $\PrA$ cannot be interpreted in $(\NN;=,+)$. In fact, the following complete criterion was very recently reached:
	
	\begin{theorem}[\cite{orders}]
		A linear order $(L,<)$ is $m$-dimensionally interpretable in $(\NN;=,+)$ for some $m\ge 1$ iff there exists some $k\in N$ and a $\PrA$-definable set $D\in\ZZ^k$ such that $L$ is isomorphic to the restriction of the lexicographic ordering on $\ZZ^k$ onto $D$.
	\end{theorem}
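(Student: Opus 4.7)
The proof splits into two directions.

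The direction $(\Leftarrow)$ is essentially routine. Represent each $z\in\ZZ$ by a pair $(a,b)\in\NN^2$ with the equivalence $(a,b)\sim(c,d)\Llra a+d=b+c$; this is a two-dimensional interpretation of $(\ZZ;=,+)$ in $(\NN;=,+)$, and its $k$-fold product interprets $\ZZ^k$ in $2k$ dimensions. Since the set $D$ is $\PrA$-definable by assumption and the lexicographic comparison on $\ZZ^k$ is plainly $\PrA$-definable, the order $(D,<_{\mathrm{lex}})$ is $m$-dimensionally interpretable in $(\NN;=,+)$ with $m=2k$.

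For the direction $(\Rightarrow)$, fix an $m$-dimensional interpretation $\iota$ of $(L,<)$ specifying a domain $D_0\subseteq\NN^m$, an equivalence relation $\sim$, and the order relation. By quantifier elimination for Presburger arithmetic in the language expanded with order and congruences modulo constants (Ginsburg--Spanier), all three are semi-linear. The plan is to decompose $D_0$ into a finite disjoint union $D_0=\bigsqcup_i C_i$ of linear cells (translates of finitely generated integer cones) such that each cell is homogeneous with respect to the atomic relations defining $<$ and $\sim$ in the sense that the sign of every primitive linear form occurring in those definitions is constant on $C_i\times C_j$ for each $i,j$.

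The core of the argument is then to analyze, on each quotient $C_i/{\sim}$ and on each pair $C_i/{\sim}\times C_j/{\sim}$, what order has been defined. After a unimodular change of $\ZZ$-coordinates on each cone, one shows that the order within a cell agrees with the lex order on the parametrising coordinates, and that between cells there is a well-defined relative position induced by a leading block of coordinates. This data is assembled into a $\ZZ$-linear map $L\hookrightarrow\ZZ^k$ whose image is $\PrA$-definable and on which the induced lex order matches $<$. The rank bound of \sref{Theorem}{one}'s predecessor (orders in $(\NN;=,+)$ have rank $\le m$, finite, hence scattered) is the crucial input: it limits the nesting depth of the coordinates needed and prevents any occurrence of a dense-like sub-order, ensuring that finitely many coordinates $k$ suffice.

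The main obstacle I anticipate is the structural claim that on each semi-linear cell the induced order, after a suitable $\ZZ$-linear change of variables, really does become lex, and that the resulting local lex structures on the various cells can be glued into a single global lex order on a common $\ZZ^k$. In particular, checking that congruence conditions (which are unavoidable in Presburger-definable relations) do not spoil the lex comparison, and that the finitely many cell-indices can be linearised into a prefix of coordinates consistent with the cross-cell order, is the delicate combinatorial point. The scattered-rank bound converts this from a transfinite problem into a finite-dimensional one, making a single $k$ suffice.
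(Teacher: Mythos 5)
The paper itself gives no proof of this statement: it is imported verbatim from \cite{orders} as an external result, so there is nothing in-paper to compare your argument against, and it has to be judged on its own merits. Your $(\Leftarrow)$ direction is correct and genuinely routine: the pair encoding of $\ZZ$ in $\NN^2$, its $k$-fold product, the $\PrA$-definability of $D$ and of lexicographic comparison together yield a $2k$-dimensional interpretation of $(D,<_{\mathrm{lex}})$.

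The $(\Rightarrow)$ direction has a genuine gap, and you have located it yourself: the assertion that on each semilinear cell the induced order becomes lexicographic after a unimodular change of coordinates, and that the local lex structures on the various cells glue into a single lex order on one common $\ZZ^k$ with $\PrA$-definable image, is not a technical verification left to the reader --- it is essentially the whole theorem. A cell decomposition on which every relevant primitive linear form has constant sign over $C_i\times C_j$ only tells you that comparison is governed by some priority list of linear forms; those forms vary from one pair of cells to another, need not extend to a common unimodular basis, and must in addition be shown to separate $\sim$-classes (so that the resulting map is injective on the quotient) while keeping the image definable despite the congruence conditions. None of this is supplied. Moreover, the rank bound you call ``the crucial input'' is much weaker than the conclusion: finiteness of rank is only a necessary condition for interpretability (there are continuum many pairwise non-isomorphic scattered orders of rank $2$, e.g.\ sums $\sum_i K_i$ with $K_i\in\{\NN,\ZZ\}$, while only countably many orders are interpretable in $(\NN;=,+)$ at all), so it cannot by itself force the existence of a lex embedding or bound the number of coordinates $k$. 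As written, the proposal is a reasonable plan whose decisive structural lemma is announced as an obstacle rather than proved.
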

	
	Yet, the same rank condition is not extended to $\BA_n$. The statement holds:
	
	\begin{theorem}
		For each $n$, there is an order of rank $n$ interpretable in $\BA_2$.
	\end{theorem}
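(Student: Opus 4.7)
The plan is to realize the ordinal $\omega^n$ as a \emph{one-dimensional} interpretation in $\BA_2$ with absolute equality and then to compute its $VD^*$-rank. The interpretability uses the extra expressive power of $V_2$ to extract ``decimated'' bit positions from a single natural, which is unavailable in pure $\PrA$; in $\PrA$ such an order is only reachable multi-dimensionally, and the rank theorem of \cite{pz} quoted above shows this is tight.

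For the first step, fix the bit-interleaving bijection $\pi\colon\NN^n\to\NN$ in which the bit of $\pi(a_1,\ldots,a_n)$ at position $kn+i-1$ equals the bit of $a_i$ at position $k$, for $1\le i\le n$ and $k\ge 0$. Because $\Digit_2$ is $\BA_2$-definable, each coordinate extraction $m\mapsto a_i$ is defined by the formula stating that $a_i$ is the unique natural such that $\Digit_2(a_i,k)=\Digit_2(m,kn+i-1)$ for every $k\in\NN$ (totality and uniqueness are immediate, since $m$ has only finitely many non-zero bits). I then define a binary relation $\prec$ on $\NN$ by setting $m\prec m'$ iff $\pi^{-1}(m)<_{\mathrm{lex}}\pi^{-1}(m')$, with the first coordinate taken as most significant; this unfolds into a $\BA_2$-formula in the free variables $m,m'$. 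The structure $(\NN,\prec)$ is order-isomorphic to $\omega^n$ via the Cantor normal form correspondence $(a_1,\ldots,a_n)\mapsto\omega^{n-1}a_1+\cdots+a_n$.

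For the second step, I would verify $\mathrm{rk}(\omega^n)=n$ by induction on $n$. The base case $n=1$ is immediate: any two elements of $\omega$ have a finite interval between them, so $\omega/{\simeq_1}$ has one class and $\mathrm{rk}(\omega)=1$. For the inductive step, in $\omega^{n+1}$ the $\simeq_1$-classes are exactly the $\omega$-columns of elements that agree on all coordinates except the last, since within such a column the interval is finite whereas between two columns the interval contains infinitely many elements. The induced order on these classes is lex order on the remaining $n$ coordinates, which is isomorphic to $\omega^n$. A routine check shows that, under the quotient map $L\to L/{\simeq_1}$, the relation $\simeq_k$ on $L$ corresponds to $\simeq_{k-1}$ on the quotient for every $k\ge 1$; hence $\mathrm{rk}(\omega^{n+1})=1+\mathrm{rk}(\omega^n)=n+1$ by the induction hypothesis.

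The main obstacle, if any, lies in verifying that bit-decimation is $\BA_2$-definable: this is precisely where $V_2$ is indispensable and where the construction departs from anything available in pure $\PrA$. Once this is in place, identifying the interpreted order with $\omega^n$ and computing its rank both reduce to standard manipulations of ordinals in Cantor normal form.
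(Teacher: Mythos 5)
Your second step (the rank computation $\mathrm{rk}(\omega^n)=n$) is fine, and your overall target---a one\-/dimensional interpretation of $\omega^n$, which is what makes the statement non-trivial compared with the multi-dimensional situation in $\PrA$---is the right one. The gap is in the first step, and it sits exactly where you suspected. The predicate $\Digit_2(x,k)$ with the position given \emph{as a number} $k$ is not $\BA_2$-definable: by \sref{Theorem}{there} every $\BA_2$-definable relation is recognizable by a synchronous automaton reading the binary expansions of its arguments in parallel, and from a definable $\Digit_2$ one would obtain the graph $\{(2^k,k)\mid k\in\NN\}$ of exponentiation (intersect with the definable set of powers of $2$), which is not $2$-automatic. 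Only the version of the bit predicate in which the position is encoded by the power $2^k$ itself is definable. More seriously, even after that repair your coordinate-extraction relation $\{(m,a_i)\mid\forall k\ \Digit_2(a_i,k)=\Digit_2(m,kn+i-1)\}$ is itself not $2$-automatic: a synchronous automaton sees bit $k$ of $a_i$ and bit $k$ of $m$ at the same step, so verifying that bit $k$ of $a_i$ equals bit $kn+i-1$ of $m$ would require buffering an unbounded number of bits (a Myhill--Nerode argument on the pairs $(2^{kn+i-1},2^k)$ makes this precise). Hence no $\BA_2$-formula defines the map $m\mapsto a_i$, and the formula you propose for $\prec$ cannot be assembled from such extractions; your ``totality and uniqueness are immediate'' step presupposes a formula that does not exist.

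The plan is salvageable, but only by defining $\prec$ \emph{in place} rather than through extraction: the set of powers of $2$ whose exponent is congruent to $i-1$ modulo $n$ is $2$-automatic (the language $0^{jn+i-1}1$ is regular and so this set is definable), and one can express ``the highest such position at which $m$ and $m'$ differ carries a $1$ in $m'$'' without ever producing $a_i$ as a number; the comparison for all $n$ residue classes is then tracked by a finite automaton, so the interleaved lexicographic order is definable. That repaired route is genuinely different from the paper's construction, which avoids interleaving altogether: the paper orders $\NN$ by the iterated values $V_2(x)$, $V_2(x-V_2(x))$, $V_2(x-V_2(x)-V_2(x-V_2(x)))$, and so on, breaking ties by $x$ itself, and verifies directly that the resulting definable order is isomorphic to $\NN^n$. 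As written, however, your proof does not go through.
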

	\begin{proof}
		The original order on $\NN$ is an order of rank $1$: $$x\le_1 y:= x\le y.$$
		
		The following order is of rank $n=2$: $$x\le_2 y:= V_2(x)<V_2(y)\vee V_2(x)=V_2(y)\wedge(x\le y).$$
		
		Indeed, for each $k>0$, there are infinitely many natural numbers of the form $m={2^k}\cdot t$, $t$ is odd, for which $V_2(m)=2^k$. Hence, the order $\le_2$ is isomorphic to $1+\NN+\NN+\ldots\cong\NN\times\NN$.
		
		Now we consider a particular subsequence $m_t={2^k}\cdot t$, for all odd $t$, of all the numbers with $V_2$ equal to $2^k$. Note that the sequence $m_t-V_2(m_t)={2^k}\cdot t-2^k=2^k(t-1)=2^{k+1}\cdot\frac{t-1}{2}$, $\frac{t-1}{2}$ spans all of $\NN$. Hence, all values of $V_2$, starting from $2^{k+1}$, occur in the sequence $V_2(m_t-V_2(m_t))$ infinitely many times (and $0$ occurs once). Thus, the order \begin{multline*}x\le_3 y:= V_2(x)<V_2(y)\vee V_2(x)=V_2(y)\wedge V_2(x-V_2(x))<V_2(y-V_2(y))\vee\\V_2(x)=V_2(y)\wedge V_2(x-V_2(x))=V_2(y-V_2(y))\wedge x\le y\end{multline*}
		\noindent
		is a definable order isomorphic to $1+\NN\times(1+\NN\times\NN)\cong\NN^3$, of rank $3$.
		
		Iterating this process (considering $V_2(x-V_2(x)-V_2(x-V_2(x)))$ in the case of $n=4$, and so on), we obtain the required definable orders of however large finite rank.
	\end{proof}
	
	\section{Towards the axiomatization of $\BA_n$}
	
	It is well-known that $\PrA$ in the extended language $\{=,+,<,0,1,\{\equiv_n\}_{n=2}^\infty\}$ has an equivalent axiomatic definition, as the first-order theory given by the following recursive set of axioms ($\underline{n}\eqdef1+\ldots+1$ $n$~times):
	
	\begin{enumerate}
		\item $x=0\lra\forall y\:(x+y=y)$
		\item $x<y\lra\exists z\:((x+z=y)\wedge\neg(z=0))$
		\item $x=1\lra 0<x\wedge\neg\exists z\:(0<z\wedge z<x)$
		\item $x\equiv_n y\lra\exists u\:(x=\underline{n}u+y\vee y=\underline{n}u+x)$
		\item $\neg(x+1=0)$
		\item $x+z=y+z\ra x=y$
		\item $(x+y)+z=x+(y+z)$
		\item $x=0\vee\exists y\:(x=y+1)$
		\item $x+y=y+x$
		\item $x<y\vee x=y\vee y<x$
		\item $(x\equiv_n 0)\vee(x\equiv_n 1)\vee\ldots\vee(x\equiv_n\underline{n-1})$ 
	\end{enumerate}
	
	Here $\underline{n}$ abbreviates $1+\ldots+1$ $n$ times. As proven by Presburger \cite{presburger}, in this extended language $\PrA$ admits quantifier elimination.
	
	It may seem natural that an axiomatization of $\BA_2$ could be created based on these axioms, by extending them with the inductive definition of $V_2$:
	
	\begin{enumerate}
		\setcounter{enumi}{11}
		\item $V_2(x)=0\lra x=0$
		\item $\neg\exists t\:(t+t=x)\ra V_2(x)=1$
		\item $\exists t\:(t+t=x)\ra V_2(x)=V_2(t)+V_2(t)$
	\end{enumerate}
	
	However, as the following result shows, this is not a sufficient axiomatization.
	
	\begin{theorem}
		The axioms and axiom schemes $(1)$--$(14)$ do not form an axiomatization of $\BA_2$.
	\end{theorem}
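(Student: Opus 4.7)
The plan is to identify a sentence that is true in $\NN$ (hence belongs to $\BA_2$) but is consistent with axioms (1)--(14). A natural candidate is
$$\psi := \forall x\,\bigl(V_2(V_2(x)) = V_2(x)\bigr),$$
which holds in $\NN$ because every value of $V_2$ is a power of $2$ and $V_2$ fixes powers of $2$. To show $\psi$ is not derivable from (1)--(14), I would construct a non-standard model of Presburger arithmetic with a rich supply of elements of non-standard $2$-adic valuation, and then equip it with a function $V_2$ satisfying the inductive rules (12)--(14) but misbehaving on those ``badly divisible'' elements.

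First I would invoke compactness to obtain a countable non-standard model $\Mc_0$ of axioms (1)--(11) containing two elements $a,b$ such that $2^n\mid a$ and $2^n\mid b$ for every standard $n$, while $a\neq 2^k b$ and $b\neq 2^k a$ for every standard $k$. Each finite fragment of these conditions is realized in $\NN$ by taking $a$ and $b$ to be products of distinct large odd primes scaled by a sufficiently high power of $2$. Let $I\subseteq\Mc_0$ denote the set of elements divisible by every standard power of $2$; then $I$ is closed under doubling and halving, and decomposes into orbits under the action of multiplication by $2$, with $a$ and $b$ belonging to distinct orbits.

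Next I would define $V_2$ on $\Mc_0$ piecewise. Set $V_2(0):=0$; for nonzero $x\notin I$, write uniquely $x=2^k y$ with $k\in\NN$ standard and $y$ odd, and put $V_2(x):=2^k$. On $I$, choose a system of orbit representatives $\{r_O\}$; for each orbit $O$ select some value $f(O)\in I\setminus O$, and define $V_2$ on $O$ to be the unique function satisfying $V_2(r_O)=f(O)$ together with the recursion $V_2(2t)=2V_2(t)$ for all $t\in O$. Then axiom (12) is immediate, axiom (13) holds because odd elements fall into the finite-valuation case with $k=0$, and axiom (14) holds on both pieces by construction.

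Finally, $\psi$ must fail at $x=a$: assuming $a=r_{O_a}$, we have $V_2(a)=f(O_a)\in O'\neq O_a$ for some orbit $O'$, and $V_2(V_2(a))$ then lies in the orbit $f(O')\neq O'$; since distinct orbits are disjoint, $V_2(V_2(a))\neq V_2(a)$. Thus $\Mc_0\models$~(1)--(14) while $\Mc_0\not\models\psi$, so (1)--(14) fail to axiomatize $\BA_2$. The main obstacle is the compactness step together with the definition of $V_2$ on $I$: one must verify that the orbits of the doubling action remain sufficiently plentiful to permit a choice $f$ sending every orbit to a different one, and that extending by the doubling recursion really yields a globally well-defined function compatible with the inductive clauses.
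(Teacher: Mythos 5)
Your proof is correct, but it takes a genuinely different route from the paper. The paper builds a completely explicit model on the set $\{(p,q)\mid p\in\QQ_{\ge 0},\ q\in\ZZ,\ p=0\Ra q\in\NN\}$ with componentwise addition, sets $V_2(p,0):=(p,0)$ for the infinitely $2$-divisible elements, and refutes the $\BA_2$-theorem $\forall x\,(V_2(x)=x\ra\forall y\,(x<y<x+x\ra V_2(y)<y))$ (``no powers of $2$ strictly between consecutive powers of $2$''), exploiting the density of the elements fixed by $V_2$. You instead obtain an abstract nonstandard model by compactness, arrange \emph{two} distinct doubling-orbits of infinitely divisible elements, let $V_2$ permute orbits nontrivially, and refute the idempotence law $\forall x\,(V_2(V_2(x))=V_2(x))$. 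Notably, each separating sentence holds in the other construction's model (in the paper's model $V_2$ \emph{is} idempotent, since it fixes every $(p,0)$; in yours the only fixed points of $V_2$ are the standard powers of $2$, so the paper's sentence survives), so the two arguments isolate genuinely different unprovable consequences of $\BA_2$. The paper's approach buys full explicitness with no appeal to compactness; yours buys a shorter and arguably more natural separating sentence at the cost of an existence argument. The two points you flag as obstacles are indeed the ones to check, and both go through: the choice of $f(O)\in I\setminus O$ for every orbit $O$ needs at least two nonzero orbits in $I$, which is exactly what the elements $a$ and $b$ provide; and the recursion $V_2(2t)=2V_2(t)$ extends consistently over each orbit because $2^k x=x$ forces $x=0$ in any model of axioms $(1)$--$(11)$, so the nonzero orbits are free $\ZZ$-orbits. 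It would be worth stating explicitly that $x=2t\in I$ iff $t\in I$, so that axiom $(14)$ never mixes the two pieces of your definition of $V_2$.
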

	\begin{proof}
		We shall construct an explicit structure that models the axioms $(1)$--$(14)$ and then provide a statement true in $\BA_2$ that does not hold in it.
		
		We start with all tuples $$\{(p,q)\mid p\in\QQ\ge 0,q\in\ZZ,p=0\Ra q\in\NN\}$$ and addition defined componentwise. This structure already fulfills all the axioms of $\PrA$, including the possibility of division by any standard natural $n$ (axiom schema $(11)$). Clearly, the elements $\{(0,n)\mid n\in\NN\}$ play the role of standard natural numbers $n$. All the remaining elements will be henceforth called \emph{non-standard numbers}.
		
		Next, we define $V_2(p,q)$ as follows. First, the standard natural numbers receive the expected value: $V_2(0,n):=(0,V_2(n))$.
		
		Each odd non-standard number (that is, such $(p,q)$ that $q$ is odd) will have $V_2(p,q):=(0,1)$, corresponding to the fact they cannot be divided by two.
		
		Each even non-standard number $(p,q)$ such that $q\neq 0$ follows $V_2(p,q):=(0,V_2(q))$, as such numbers can be divided by $2$ exactly $V_2(q)$ times before reaching the number $(p/2^{V_2(q)},q/2^{V_2(q)})$ that is odd. (Note that $p$ are positive rational numbers that can be divided an unlimited amount of times.)
		
		Finally, for the non-standard numbers of the form $(p,0)$ that can be infinitely divided by two, we set their value of $V_2$ to be equal to themselves: $V_2(p,0):=(p,0)$. Unlike the other cases, these values of $V_2$ are non-standard themselves.
		
		Clearly, axioms $(12)--(14)$ hold in this structure. However, the following formula is true in $(\NN;=,+,V_2)$ and thus is a theorem of $\BA_2$: $$\forall x\:(V_2(x)=x\ra\forall y\:(x<y<x+x\ra V_2(y)<y)).$$
		
		It expresses the idea "between $2^k$ and $2^{k+1}$, there are no more powers of $2$". Obviously, it holds in $(\NN;=,+,V_2)$. However, between each two non-standard "powers of $2$" $(p,0)$ and $(2p,0)$ there is infinitely many additional elements with the property $V_2(n)=n$, such as $(3p/2,0)$.
	\end{proof}
	
	\printbibliography[heading=bibintoc,title={References}]
\end{document}